 \newtheorem{thm}{Theorem}[section]
 \newtheorem{cor}[thm]{Corollary}
 \newtheorem{prop}[thm]{Proposition}
 \newtheorem{conjecture}[thm]{Conjecture}
 \theoremstyle{definition}
 \theoremstyle{remark}
 \newtheorem{rem}[thm]{Remark}
 \numberwithin{equation}{section}
\begin{document}

%
%
%
%
%
%
%
%
%

\title[Nesbitt and Shapiro Cyclic Sum Inequalities for Positive Matrices]
 {Nesbitt and Shapiro Cyclic Sum Inequalities for Positive Definite Matrices}

\author{Projesh Nath Choudhury}

\address{%
Department of Mathematics\\
Indian Institute of  Science\\
Bengaluru 560012\\
India}

\email{projeshc@iisc.ac.in, projeshnc@alumni.iitm.ac.in}

\thanks{P.N. Choudhury is supported by National Post-Doctoral Fellowship (PDF/2019/000275), from SERB, Government of India. K.C. Sivakumar acknowledges funds received from MATRICS (MTR/2018/001132) of SERB, Government of India.}
\author{K.C. Sivakumar}
\address{Department of Mathematics\\
Indian Institute of Technology Madras\\ Chennai 600036\\ India}
\email{kcskumar@iitm.ac.in}
\subjclass{Primary 15A45, 47A63; Secondary 15B57, 26D15}

\keywords{Positive definite matrices, trace inequality, eigenvalue inequality, Nesbitt's inequality, Shapiro's inequality}

\date{October 18, 2021}

\begin{abstract}
The aim of this note is to show that certain number theoretic inequalities due to Nesbitt and Shapiro have noncommutative counterparts involving positive definite matrices.
\end{abstract}

\maketitle
\section{Introduction}
Let $\mathbb{C}^{n\times n}$ denote the set of all $n\times n$ matrices over the complex numbers. A Hermitian matrix $A\in \mathbb{C}^{n\times n}$ is called a positive definite, if $x^*Ax > 0$ for all $0 \neq x\in \mathbb{C}^n$. An interesting and rather classical topic of study in matrix analysis concerns inequalities for positive definite matrices. Roughly speaking, matrix inequalities are noncommutative versions of the corresponding scalar inequalities. Most of the work in this direction considers the comparison of eigenvalues or singular values or the traces, of various combinations of 
two positive definite matrices \cite{B07,CS17,Lin17,zhang2}. The main objective of this short note is to present matrix versions of certain inequalities involving more than two positive real numbers. 

We begin by stating the inequalities that we will be interested in obtaining matrix analogues of, in this work. For three positive real numbers $a,b$ and $c$, Nesbitt's cyclic sum  inequality \cite{N03}, is the following:
\begin{equation}
	\frac{a}{b+c}+\frac{b}{c+a}+\frac{c}{a+b}\geq \frac{3}{2} \label{eq1}.
\end{equation}
For positive real numbers $a_1,\ldots ,a_p ~(p\geq 3)$, Shapiro \cite{S54} posed the problem of proving the inequality
\begin{equation}
	\frac{a_1}{a_2+a_3}+\frac{a_2}{a_3+a_4}+\cdots+\frac{a_p}{a_1+a_2}\geq \frac{p}{2} \label{eq2}.
\end{equation} 
Note that if $p=3$, then Shapiro's inequality is the same as Nesbitt's inequality. Let us denote $$S_p:=S(a_1,\ldots,a_p)=\frac{a_1}{a_2+a_3}+\frac{a_2}{a_3+a_4}+\cdots+\frac{a_p}{a_1+a_2}.$$ Then, inequality \eqref{eq2} is the same as: $$S_p \geq \frac{p}{2}.$$

Inequality \eqref{eq2} has a rich history. In 1958, Mordell \cite{Mo58} proved Shapiro's inequality for $p=3,4,5,6$. The inequality was subsequently proved for $p=8$ \cite{Dj63}, $p=10$ \cite{N68}, and $p=11,12$ \cite{GL76}. It follows from a remarkable result by Diananda \cite{D63} that $S_p\geq p/2$ for all $p\leq 12$. On the other hand,  Zulauf \cite{Zu58} showed that the inequality does not hold for $p=14$, and for $p=25$, Daykin \cite{D71} and Malcolm \cite{Mal71} gave a counterexample. The difference in the behavior between even and odd $p$ was explained by Searcy and Troesch \cite{ST79}. Finally, Shapiro's inequality was completely settled by Troesch \cite{Tr85,Tr89}. To summarize, Shapiro's inequality holds for positive numbers $a_1, a_2, \ldots, a_p$ precisely for the following values of $p:$\[3,4,5,\ldots,12,\quad 13,15,17,\ldots,23.\]

Let us turn our attention to the present work. A matrix counterpart of Nesbitt's inequality is obtained in Theorem \ref{nesbitt}. In Theorem \ref{S4}, we obtain a matrix version of the Shapiro's inequality for four variables. In Theorem \ref{shapirogen}, we give a necessary condition on the validity of matrix version of Shapiro's inequality. In the last part, two matrix analogues of the Cauchy-Schwarz inequality are proved. After a brief discussion on the notation and two preliminary results, we prove our results.

\textbf{Notation:} The trace of a matrix $A \in \mathbb{C}^{n\times n}$, denoted by $Tr(A)$ is the sum of its diagonal entries/eigenvalues. If $A$ and $B$ are Hermitian matrices, and $A-B$ is positive semidefinite, then this will be denoted by $A\succeq B.$ Let $\lambda (A)$ denote an arbitrary eigenvalue of $A \in \mathbb{C}^{n \times n}$. The next two results are only stated. Their proofs follow easily.

\begin{thm}\label{herm}\cite[Exercise 12.14]{abadir}
	Let $A$ and $B$ be positive semidefinite matrices. Then
	\begin{equation}
		0\leq Tr(AB)\leq Tr(A)Tr(B). \nonumber
	\end{equation}
\end{thm}

\begin{thm}\label{thrm3}\cite[Exercise~18, p.~213]{zhang2}
	Let $X,Y \in \mathbb{C}^{m\times n}$ and let $A \in \mathbb{C}^{m \times m}$ be a positive definite matrix. Then
	\begin{center}
		$\vert Tr(X^*Y)\vert ^2 \leq Tr(X^*AX) Tr(Y^*A^{-1}Y).$
	\end{center}
\end{thm}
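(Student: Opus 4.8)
The plan is to reduce this weighted inequality to the ordinary Cauchy--Schwarz inequality for the Frobenius (Hilbert--Schmidt) inner product $\langle U,V\rangle := Tr(U^*V)$ on $\mathbb{C}^{m\times n}$. Recall that this form is a genuine inner product, so it satisfies $|Tr(U^*V)|^2 \le Tr(U^*U)\,Tr(V^*V)$ for all $U,V\in\mathbb{C}^{m\times n}$; if one prefers a self-contained derivation, this follows by expanding the nonnegative quantity $Tr\big((U-\lambda V)^*(U-\lambda V)\big)\ge 0$ and optimizing over $\lambda\in\mathbb{C}$.

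First I would exploit the positive definiteness of $A$ to extract its unique positive definite square root $A^{1/2}$, whose inverse $A^{-1/2}$ is also positive definite; both are Hermitian. The key algebraic observation is the factorization $Tr(X^*Y) = Tr\big(X^*A^{1/2}A^{-1/2}Y\big) = Tr\big((A^{1/2}X)^*(A^{-1/2}Y)\big)$, where I have used $A^{1/2}A^{-1/2}=I$ together with $(A^{1/2})^* = A^{1/2}$. Setting $U := A^{1/2}X$ and $V := A^{-1/2}Y$, this reads $Tr(X^*Y) = Tr(U^*V)$.

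Next I would apply the Frobenius Cauchy--Schwarz inequality to $U$ and $V$ to obtain $|Tr(X^*Y)|^2 = |Tr(U^*V)|^2 \le Tr(U^*U)\,Tr(V^*V)$, and it then remains only to identify the two factors on the right. Using the Hermitian property of the square roots and the cyclic invariance of the trace, $Tr(U^*U) = Tr\big(X^*A^{1/2}A^{1/2}X\big) = Tr(X^*AX)$ and likewise $Tr(V^*V) = Tr\big(Y^*A^{-1/2}A^{-1/2}Y\big) = Tr(Y^*A^{-1}Y)$, which is exactly the desired bound.

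There is essentially no hard step here; the only point requiring care is the legitimacy of the square-root device, namely that a positive definite $A$ admits a Hermitian positive definite square root with $A^{1/2}A^{1/2}=A$ and $A^{-1/2}A^{-1/2}=A^{-1}$. This is standard (unitarily diagonalize $A$ and take positive square roots of the eigenvalues), so the whole argument is a one-line reduction once the factorization is in place. Equality holds precisely when $U$ and $V$ are linearly dependent, i.e.\ when $A^{1/2}X$ and $A^{-1/2}Y$ are proportional.
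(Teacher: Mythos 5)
Your proof is correct. The paper does not actually supply an argument for this statement---it is quoted from Zhang's book with the remark that the proofs ``follow easily''---and your square-root reduction, writing $Tr(X^*Y)=Tr\bigl((A^{1/2}X)^*(A^{-1/2}Y)\bigr)$ and invoking the Frobenius Cauchy--Schwarz inequality (which is precisely the paper's Theorem~\ref{sch}), is exactly the standard argument that citation intends, with all identifications $Tr(U^*U)=Tr(X^*AX)$ and $Tr(V^*V)=Tr(Y^*A^{-1}Y)$ carried out correctly.
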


\section{Main Results}
It is well known that, if $A$ and $B$ are positive definite matrices, then $Tr\{(A-B)(B^{-1}-A^{-1})\} \geq 0$ \cite[Exercise~12.28 (c)]{abadir} and \cite[Exercise~21, p.~213]{zhang2}. In our first result, we strengthen this inequality by showing that each eigenvalue of $(A-B)(B^{-1}-A^{-1})$ is greater than or equal to $0$. 

\begin{rem}\label{sumformula}
	The following facts will be used in our discussion without further reference. If $A$ and $B$ are positive definite, then the eigenvalues of the product $X=AB^{-1}$ are positive, and $$\lambda(X+X^{-1})\geq 2.$$ Also, $$Tr(X)Tr(X^{-1})\geq n^2,$$ for any positive definite matrix $X.$
\end{rem}

\begin{prop}\label{eigineq1}
	Let $A,B \in \mathbb{C}^{n \times n}$ be positive definite matrices. Then
	\begin{center}
		$\lambda\{(A-B)(B^{-1}-A^{-1})\} \geq 0.$
	\end{center}
\end{prop}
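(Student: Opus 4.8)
The plan is to first simplify the matrix product and then exploit a similarity transformation that converts a non-Hermitian matrix into a Hermitian one, whose eigenvalues can be read off directly.

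First I would expand the product. Since $A$ and $B$ are Hermitian and invertible, multiplying out gives
\[
(A-B)(B^{-1}-A^{-1}) = AB^{-1} - I - I + BA^{-1} = AB^{-1} + BA^{-1} - 2I.
\]
Writing $X = AB^{-1}$ and noting that $X^{-1} = BA^{-1}$, this expression is exactly $X + X^{-1} - 2I$. So the claim reduces to showing that every eigenvalue of $X + X^{-1}$ is at least $2$.

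The key step is the observation that although $X = AB^{-1}$ need not be Hermitian, it is similar to a positive definite matrix. Indeed, conjugating by $B^{1/2}$ yields
\[
B^{-1/2}\, X \, B^{1/2} = B^{-1/2} A B^{-1/2} =: M,
\]
which is Hermitian and positive definite. Consequently $X + X^{-1} - 2I$ is similar, via the same $B^{1/2}$, to $M + M^{-1} - 2I$, a Hermitian matrix. Since similar matrices share the same eigenvalues, it suffices to analyze $M + M^{-1} - 2I$.

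Finally, because $M$ is positive definite its eigenvalues $\mu$ are positive, and the eigenvalues of $M + M^{-1} - 2I$ are precisely $\mu + \mu^{-1} - 2 = (\sqrt{\mu} - 1/\sqrt{\mu})^2 \geq 0$; equivalently, one may simply invoke the fact recorded in Remark \ref{sumformula} that $\lambda(X + X^{-1}) \geq 2$. Either way, every eigenvalue of $(A-B)(B^{-1}-A^{-1})$ is nonnegative. The only genuine subtlety is that the product itself is not Hermitian, so one cannot argue positive semidefiniteness directly; the similarity reduction to the Hermitian matrix $M + M^{-1} - 2I$ is what circumvents this and, at the same time, guarantees that the eigenvalues are real.
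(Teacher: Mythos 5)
Your proposal is correct and follows essentially the same route as the paper: expand the product to $X + X^{-1} - 2I$ with $X = AB^{-1}$ and invoke the fact that $\lambda(X + X^{-1}) \geq 2$, which the paper cites from Remark \ref{sumformula} without proof. Your only addition is to justify that remark explicitly via the similarity $B^{-1/2} X B^{1/2} = B^{-1/2} A B^{-1/2}$, which is a sound (and welcome) elaboration rather than a different argument.
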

\begin{proof}
	In view of Remark \ref{sumformula}, one has 
	\begin{eqnarray}
		\lambda\{(A-B)(B^{-1}-A^{-1})\} & =& \lambda(AB^{-1}+BA^{-1}-2I)\nonumber \\
		&=&\lambda(X+X^{-1})-2 \geq 0.\nonumber 
	\end{eqnarray}
\end{proof}
To prove our main results, we need a matrix version of the following inequality: Let $a_1,\ldots,a_p$ be positive real numbers. Then, one has:
\begin{equation}
	(a_1+\cdots + a_p)(\frac{1}{a_1}+\cdots +\frac{1}{a_p})\geq p^2  \label{eq3}.
\end{equation}

\begin{thm}\label{pd1}
	Let $A_1, \ldots, A_p\in \mathbb{C}^{n\times n}$ be positive definite matrices. Then
	\begin{center}
		$A^{-1}_1 +\cdots + A^{-1}_p\succeq p^2(A_1 +\cdots + A_p)^{-1}$.
	\end{center}
\end{thm}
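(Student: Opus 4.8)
The plan is to reduce the matrix statement to its scalar analogue \eqref{eq3} by testing against vectors, after a congruence that normalizes the sum $A_1+\cdots+A_p$ to the identity. Write $S:=A_1+\cdots+A_p$ and $T:=A_1^{-1}+\cdots+A_p^{-1}$; both are positive definite, so $S^{1/2}$ and $S^{-1/2}$ are well defined. Since congruence by the invertible matrix $S^{1/2}$ preserves the Löwner order, the desired inequality $T\succeq p^2 S^{-1}$ is equivalent to $S^{1/2}TS^{1/2}\succeq p^2 I$. Setting $B_i:=S^{-1/2}A_iS^{-1/2}$, each $B_i$ is positive definite, $\sum_{i=1}^p B_i = S^{-1/2}\big(\sum_i A_i\big)S^{-1/2}=I$, and $S^{1/2}A_i^{-1}S^{1/2}=B_i^{-1}$. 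Hence the claim becomes the following normalized one: if the positive definite matrices $B_1,\dots,B_p$ satisfy $\sum_i B_i=I$, then $\sum_{i=1}^p B_i^{-1}\succeq p^2 I$.

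To prove this reduced inequality I would test against an arbitrary unit vector $x\in\mathbb{C}^n$. Put $t_i:=x^*B_ix>0$; then $\sum_i t_i = x^*Ix = 1$. The key pointwise bound is $x^*B_i^{-1}x\geq 1/t_i$, which is exactly Theorem \ref{thrm3} applied with $X=Y=x$ and $A=B_i$: the traces there collapse to the scalars $Tr(x^*x)=x^*x=1$, $Tr(x^*B_ix)=t_i$, and $Tr(x^*B_i^{-1}x)=x^*B_i^{-1}x$, so that $1\leq t_i\,(x^*B_i^{-1}x)$. Summing over $i$ and invoking the scalar arithmetic–harmonic mean inequality \eqref{eq3} for the positive numbers $t_1,\dots,t_p$ gives $x^*\big(\sum_i B_i^{-1}\big)x\geq \sum_i 1/t_i \geq p^2/\sum_i t_i = p^2 = p^2\, x^*x$. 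As $x$ was an arbitrary unit vector, $\sum_i B_i^{-1}\succeq p^2 I$, which is what we wanted; undoing the congruence yields $T\succeq p^2 S^{-1}$.

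The only genuinely substantive step is spotting the congruence $B_i=S^{-1/2}A_iS^{-1/2}$, which simultaneously normalizes $\sum_i B_i=I$ and turns $A_i^{-1}$ into $B_i^{-1}$; after that the matrix problem decouples, vector by vector, into the two scalar facts already available, namely the Cauchy–Schwarz estimate of Theorem \ref{thrm3} and inequality \eqref{eq3}. I do not expect any real obstacle beyond routine bookkeeping: one needs only that $S^{1/2}$ is well defined (true since $S$ is positive definite) and that congruence by an invertible matrix preserves $\succeq$, both standard. An alternative, more abstract route would be to quote the operator convexity of $X\mapsto X^{-1}$ applied with equal weights $1/p$, which gives $\big(\tfrac1p\sum_i A_i\big)^{-1}\preceq \tfrac1p\sum_i A_i^{-1}$ and hence the same conclusion; I prefer the elementary vector argument above since it stays within the tools stated in the excerpt.
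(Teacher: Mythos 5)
Your proof is correct, but it takes a genuinely different route from the paper's. The paper linearizes the problem with a block-matrix trick: by Albert's theorem \cite{albert}, each $M_i=\begin{pmatrix} A_i^{-1} & I \\ I & A_i \end{pmatrix}$ is positive semidefinite, hence so is the sum $M=\begin{pmatrix} \sum_i A_i^{-1} & pI \\ pI & \sum_i A_i \end{pmatrix}$, and taking the Schur complement of $M$ with respect to its $(2,2)$ block yields $\sum_i A_i^{-1}-p^2\bigl(\sum_i A_i\bigr)^{-1}\succeq 0$ in two lines. You instead normalize by the congruence $B_i=S^{-1/2}A_iS^{-1/2}$ (so that $\sum_i B_i=I$ and $S^{1/2}A_i^{-1}S^{1/2}=B_i^{-1}$) and then decouple the matrix inequality into scalar inequalities, one unit vector at a time: the Cauchy--Schwarz bound $x^*B_i^{-1}x\geq 1/(x^*B_ix)$ (a legitimate application of Theorem \ref{thrm3} with $X=Y=x$, viewed as $n\times 1$ matrices) followed by the scalar AM--HM inequality \eqref{eq3} applied to $t_i=x^*B_ix$. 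Both proofs are complete and short. What the paper's approach buys: no reduction step at all --- the L\"owner inequality drops out of a single positivity statement plus a Schur complement, and summing the blocks $M_i$ is precisely what makes the inequality ``additive'' in the $A_i$; this is in essence the standard proof of joint operator convexity of the inverse. What yours buys: it stays entirely within tools already stated in the paper (Theorem \ref{thrm3} and inequality \eqref{eq3}), needs no external citation, and makes transparent that after normalization the matrix statement is a pointwise scalar phenomenon. Your closing remark about operator convexity of $X\mapsto X^{-1}$ is a valid third route, though it is morally closest to the paper's proof, since that convexity is itself usually established by the same block-matrix/Schur-complement argument.
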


\begin{proof}
	Let $A_1, \ldots, A_p\in \mathbb{C}^{n\times n}$ be positive definite matrices. For $i=1,\ldots,p$, define the matrices
	\[M_i=\begin{pmatrix}
		A^{-1}_i & I \\I & A_i
	\end{pmatrix}.\]
	By \cite[Theorem 1]{albert}, $M_i\succeq 0$ for $i=1,\ldots p$.  Thus
	\[M=\sum \limits_{i=1}^{p} M_i=\begin{pmatrix}
		\sum \limits_{i=1}^{p} A^{-1}_i & pI\\pI & \sum \limits_{i=1}^{p} A_i
	\end{pmatrix}\succeq 0.\]
	Since Schur complement preserves positive semidefiniteness, by considering the Schur complement of $M$ with respect to the fourth block we have,
	\[\sum \limits_{i=1}^{p} A^{-1}_i\succeq p^2 \left( \sum \limits_{i=1}^{p} A_i \right)^{-1}.\]
\end{proof}

Next, we prove a matrix counterpart for inequality \eqref{eq3}.

\begin{cor}\label{pd}
	Let $A_1, \ldots, A_p\in \mathbb{C}^{n\times n}$ be positive definite matrices. Then
	\begin{center}
		$\lambda\{(A_1 +\cdots + A_p)(A^{-1}_1 +\cdots + A^{-1}_p)\}\geq p^2$.
	\end{center}
\end{cor}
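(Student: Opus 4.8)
The plan is to deduce the statement directly from Theorem \ref{pd1} by passing from the Loewner order to a similarity that renders the relevant product Hermitian. First I would abbreviate $S := A_1 + \cdots + A_p$ and $T := A^{-1}_1 + \cdots + A^{-1}_p$; both are positive definite, being sums of positive definite matrices, so $S$ admits a positive definite square root $S^{1/2}$. Theorem \ref{pd1} then reads $T \succeq p^2 S^{-1}$, that is, $T - p^2 S^{-1} \succeq 0$.

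Next I would congruence-transform this inequality by $S^{1/2}$. Since $X \succeq 0$ implies $C^*XC \succeq 0$ for every $C$, applying this with the Hermitian matrix $C = S^{1/2}$ to $T - p^2 S^{-1}\succeq 0$ yields
\[
	S^{1/2} T S^{1/2} - p^2 S^{1/2} S^{-1} S^{1/2} = S^{1/2} T S^{1/2} - p^2 I \succeq 0.
\]
Hence $S^{1/2} T S^{1/2} \succeq p^2 I$, and because $S^{1/2} T S^{1/2}$ is Hermitian, each of its eigenvalues is at least $p^2$.

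Finally I would relate these eigenvalues to those of the product $ST = (A_1 + \cdots + A_p)(A^{-1}_1 + \cdots + A^{-1}_p)$ that appears in the statement. The key observation is that $ST$ is similar to the Hermitian matrix $S^{1/2} T S^{1/2}$, via $S^{-1/2}(ST)S^{1/2} = S^{1/2} T S^{1/2}$. Similar matrices have identical spectra, so $\lambda\{ST\} = \lambda\{S^{1/2} T S^{1/2}\} \geq p^2$ for every eigenvalue, which is precisely the desired conclusion.

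The only genuine subtlety, and the step I would treat most carefully, is this last one: the product of two positive definite matrices is generally not Hermitian, so a priori its eigenvalues need not be real and the symbol $\lambda\{ST\}$ must be read correctly. The similarity to $S^{1/2} T S^{1/2}$ dispels this at once, ensuring that the spectrum of $ST$ is real, positive, and bounded below by $p^2$; everything else is a routine congruence-and-similarity manipulation of the inequality furnished by Theorem \ref{pd1}.
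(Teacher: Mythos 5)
Your proof is correct and follows essentially the same route as the paper: invoke Theorem \ref{pd1}, conjugate by $S^{1/2}$ to obtain $S^{1/2}TS^{1/2}\succeq p^2 I$, and identify the spectrum of $ST$ with that of this Hermitian matrix. The only difference is cosmetic — you spell out the similarity $S^{-1/2}(ST)S^{1/2}=S^{1/2}TS^{1/2}$, which the paper leaves implicit.
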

\begin{proof}
	By Theorem \ref{pd1},
	\begin{center}
		$A^{-1}_1 +\cdots + A^{-1}_p\succeq p^2(A_1 +\cdots + A_p)^{-1}$.
	\end{center}
	Since $A_1 +\cdots + A_p$ is positive definite, its square root exists. Thus \begin{center} $(A_1 +\cdots + A_p)^{1/2}(A^{-1}_1 +\cdots + A^{-1}_p)(A_1 +\cdots + A_p)^{1/2}\succeq p^2I.$
	\end{center}
	Hence
	\begin{align*}
		&\lambda\{(A_1 +\cdots + A_p)(A^{-1}_1 +\cdots + A^{-1}_p)\} \nonumber\\= &\lambda\{(A_1 +\cdots + A_p)^{1/2}(A^{-1}_1 +\cdots + A^{-1}_p)(A_1 +\cdots + A_p)^{1/2}\}\nonumber \\
		\geq &p^2. \nonumber
	\end{align*}
	
\end{proof}

These results allow us to show a noncommutative version of Nesbitt's inequality \eqref{eq1}.

\begin{thm}\label{nesbitt}
	Let $A, B, C\in \mathbb{C}^{n\times n}$ be positive definite matrices. Then $\lambda(M) \geq \frac{3}{2}$, where 
	\begin{center}
		$M=A(B+C)^{-1} +B(C+A)^{-1} + C(A+B)^{-1}$.
	\end{center}
\end{thm}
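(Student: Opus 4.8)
The plan is to mimic the classical scalar proof of Nesbitt's inequality, in which one adds $1$ to each summand so as to expose a common numerator and then reduce to inequality \eqref{eq3}. First I would introduce the abbreviations $X = B+C$, $Y = C+A$, $Z = A+B$, each of which is positive definite, and record the elementary identity $X + Y + Z = 2(A+B+C)$.

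The crucial observation is the matrix identity obtained from $A X^{-1} = (A+X)X^{-1} - I = (A+B+C)X^{-1} - I$, together with the analogous expressions for $BY^{-1}$ and $CZ^{-1}$. Adding the three gives
$$M + 3I = (A+B+C)\bigl(X^{-1}+Y^{-1}+Z^{-1}\bigr) = \tfrac{1}{2}(X+Y+Z)\bigl(X^{-1}+Y^{-1}+Z^{-1}\bigr),$$
where the second equality uses $A+B+C = \tfrac{1}{2}(X+Y+Z)$. This reduces the theorem to the already-established Corollary \ref{pd}.

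Next I would invoke Corollary \ref{pd} with $p=3$ applied to the positive definite matrices $X, Y, Z$, which yields $\lambda\{(X+Y+Z)(X^{-1}+Y^{-1}+Z^{-1})\} \geq 9$. Since the displayed identity exhibits $M+3I$ as exactly half of this product, every eigenvalue of $M+3I$ is at least $\tfrac{9}{2}$. Finally, the eigenvalues of $M+3I$ are precisely $\lambda(M)+3$, so $\lambda(M) \geq \tfrac{9}{2} - 3 = \tfrac{3}{2}$, as desired.

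The argument is entirely elementary once the identity is in hand; the only point requiring a little care is that $M$ is in general non-Hermitian, so the conclusion "$\lambda(M)\geq \tfrac{3}{2}$" is a statement about (real) eigenvalues rather than a Loewner-order statement. I expect this to be the one spot to watch, but it causes no genuine difficulty: by the identity, $M+3I$ coincides with a scalar multiple of a product of two positive definite matrices, whose eigenvalues are therefore real and positive and are bounded below by Corollary \ref{pd}, and the shift by $3I$ transfers the bound to $M$ without any further hypotheses.
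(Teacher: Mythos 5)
Your proposal is correct and is essentially identical to the paper's own proof: the paper also rewrites $M=(A+B+C)\left[(B+C)^{-1}+(C+A)^{-1}+(A+B)^{-1}\right]-3I$, substitutes $X=B+C$, $Y=C+A$, $Z=A+B$ to obtain $M=\tfrac{1}{2}(X+Y+Z)(X^{-1}+Y^{-1}+Z^{-1})-3I$, and then applies Corollary \ref{pd} to conclude $\lambda(M)\geq \tfrac{1}{2}3^2-3=\tfrac{3}{2}$. Your extra remark about $M$ being non-Hermitian and the eigenvalue shift under adding $3I$ is a sound clarification of a point the paper leaves implicit.
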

\begin{proof}
	One may rewrite $M$ as
	\begin{center}
		$M=(A+B+C)[(B+C)^{-1} +(C+A)^{-1} + (A+B)^{-1}]-3I$.
	\end{center}
	Let $X=B+C$, $Y=C+A$ and $Z=A+B$. Then
	\begin{center}
		$M= \frac{1}{2}(X+Y+Z)(X^{-1}+Y^{-1}+Z^{-1})-3I$.
	\end{center}
	By Corollary \ref{pd},
	\begin{center}
		$\lambda(M) \geq \frac{1}{2}3^2-3=\frac{3}{2}$.
	\end{center}
\end{proof}

The next result generalizes the matrix version of Nesbitt's inequality to $k$ variables. We leave the details of the proof to the interested reader.\\

\begin{thm}
	Let $A_1, \ldots, A_k\in \mathbb{C}^{n\times n}$ be positive definite matrices and let $S=\sum \limits_{i=1}^{k} A_i$. Then $\lambda(M)\geq \frac{k}{k-1}$, where
	\begin{center}
		$M=\sum \limits_{i=1}^{k} A_i (S-A_i)^{-1}.$
	\end{center}
\end{thm}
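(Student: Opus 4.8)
The plan is to follow verbatim the argument used for the three-variable case in Theorem \ref{nesbitt}, reducing the general statement to Corollary \ref{pd}. The first step is to rewrite each summand of $M$ by means of the identity $A_i = S - (S-A_i)$, which gives $A_i(S-A_i)^{-1} = S(S-A_i)^{-1} - I$. Summing over $i$ then yields
\begin{equation}
	M = S\sum_{i=1}^{k}(S-A_i)^{-1} - kI. \nonumber
\end{equation}

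Next I would introduce the auxiliary matrices $B_i := S - A_i = \sum_{j\neq i} A_j$, each of which is positive definite, being a sum of positive definite matrices (this is where one needs $k\geq 2$). The crucial bookkeeping observation is that
\begin{equation}
	\sum_{i=1}^{k} B_i = \sum_{i=1}^{k}(S-A_i) = kS - S = (k-1)S, \nonumber
\end{equation}
so that $S = \frac{1}{k-1}\sum_{i=1}^{k} B_i$. Substituting this into the expression for $M$ produces the clean form
\begin{equation}
	M = \frac{1}{k-1}\Bigl(\sum_{i=1}^{k} B_i\Bigr)\Bigl(\sum_{i=1}^{k} B_i^{-1}\Bigr) - kI. \nonumber
\end{equation}

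At this point Corollary \ref{pd} applies directly to the positive definite matrices $B_1,\ldots,B_k$, giving $\lambda\{(\sum_i B_i)(\sum_i B_i^{-1})\} \geq k^2$. Since subtracting $kI$ shifts every eigenvalue down by $k$, while multiplying by the positive scalar $\frac{1}{k-1}$ scales each eigenvalue, an arbitrary eigenvalue of $M$ satisfies
\begin{equation}
	\lambda(M) \geq \frac{k^2}{k-1} - k = \frac{k}{k-1}. \nonumber
\end{equation}

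I expect no genuine obstacle here; the only point deserving a word of care is that $M$ need not be Hermitian, since a product of two positive definite matrices generally is not, so the conclusion must be phrased for an arbitrary eigenvalue rather than through the Loewner order. This is precisely the form in which Corollary \ref{pd} is stated, and the affine transformation $\mu \mapsto \frac{1}{k-1}\mu - k$ preserves the lower bound, so the final passage is immediate. The three-variable case of Theorem \ref{nesbitt} is recovered by setting $k=3$, where $\frac{k}{k-1} = \frac{3}{2}$.
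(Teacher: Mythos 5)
Your proof is correct and is precisely the argument the paper intends: it mirrors the proof of Theorem \ref{nesbitt} (rewrite $M$ as $S\sum_i(S-A_i)^{-1}-kI$, note $\sum_i(S-A_i)=(k-1)S$, and apply Corollary \ref{pd} to the positive definite matrices $B_i=S-A_i$), which is exactly the generalization the paper leaves to the reader. The eigenvalue bookkeeping via the affine map $\mu\mapsto\frac{1}{k-1}\mu-k$ and your remark that $M$ need not be Hermitian are both handled correctly.
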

As was mentioned earlier, Shapiro's inequality for $p=4$ says
\begin{eqnarray}
	\frac{a}{b+c}+\frac{b}{c+d}+\frac{c}{d+a}+\frac{d}{a+b}\geq 2, ~\forall ~a,b,c,d \in (0,\infty) \label{sheq4}.
\end{eqnarray}
It turns out that a verbatim analogue of \eqref{sheq4} for the \it {eigenvalues}, does not hold. For example, let $A=\begin{pmatrix}
	5  & 6 \\6 & 7.5
\end{pmatrix}, B=\begin{pmatrix}
	2  & 1 \\1 & 2
\end{pmatrix}, C=\begin{pmatrix}
	6  & 4 \\4 & 3
\end{pmatrix}$ and $D=\begin{pmatrix}
	3  & 2 \\2 & 5
\end{pmatrix}$. Then $A,B,C,D$ are positive definite matrices, and the eigenvalues of $$M:=A(B+C)^{-1}+B(C+D)^{-1}+C(D+A)^{-1}+D(A+B)^{-1}$$ are $2.6393 \pm 0.1871 i$. However, in the next result, we prove a generalization of Shapiro's inequality, in terms of the trace function. For positive definite matrices $A_1,\ldots, A_p \in \mathbb{C}^{n \times n}~(p\geq 3)$, define 
\begin{center}
	$F_p:=F(A_1, \ldots, A_p):=Tr\left[\sum\limits_{i=1}^p A_i(A_{i+1}+A_{i+2})^{-1}\right],$
\end{center}
where $A_{p+1}=A_1$ and $A_{p+2}=A_2$. In our discussion, we shall find it convenient to not specify the domain of $F$. This allows us to use the same $F$ simultaneously, even when we are considering different numbers of positive definite matrices as arguments (see the proof of Proposition \ref{shapirogen}, for instance). We are interested in studying the following matrix version of Shapiro's inequality:
\begin{eqnarray}
	F_p\geq \frac{p}{2}n. \label{maineqn}
\end{eqnarray}

Note that the case $p=3$ follows from Theorem \ref{nesbitt}. In the next result, we show that the above inequality holds for $p=4$.

\begin{thm}\label{S4}
	$F_4\geq 2n$.
\end{thm}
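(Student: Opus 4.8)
The plan is to bound $F_4$ from below term by term by a single linearization (tangent-line) step and then reduce everything to a clean trace inequality for positive semidefinite matrices summing to the identity. Throughout I would write $A_1=A,\ A_2=B,\ A_3=C,\ A_4=D$, set $P_i=A_{i+1}+A_{i+2}$ (indices mod $4$), and $S=A_1+A_2+A_3+A_4$, so that $F_4=\sum_{i=1}^4 Tr(A_iP_i^{-1})$ and $\sum_{i=1}^4 P_i=2S$. The heuristic guiding the whole argument is that equality should occur at $A=C,\ B=D$, where every denominator $P_i$ equals $A+B=\tfrac12 S$; this dictates the anchor used below.

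First I would record the operator tangent-line inequality for the map $P\mapsto P^{-1}$: for positive definite $P$ and the anchor $P_0=\tfrac12 S$ one has $P^{-1}\succeq 4S^{-1}-4S^{-1}PS^{-1}$. This is exactly the operator form $X+X^{-1}\succeq 2I$ of the inequality recorded in Remark \ref{sumformula} (it follows from $(X^{1/2}-X^{-1/2})^2\succeq 0$), applied to $X=2S^{-1/2}PS^{-1/2}$ and conjugated by $S^{-1/2}$. Since $A_i\succeq 0$ and $P_i^{-1}-4S^{-1}+4S^{-1}P_iS^{-1}\succeq 0$, Theorem \ref{herm} gives $Tr\!\left[A_i\bigl(P_i^{-1}-4S^{-1}+4S^{-1}P_iS^{-1}\bigr)\right]\ge 0$, i.e. $Tr(A_iP_i^{-1})\ge 4\,Tr(A_iS^{-1})-4\,Tr(A_iS^{-1}P_iS^{-1})$. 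Summing over $i$ and using $\sum_i A_i=S$ in the linear term collapses it to $4\,Tr(SS^{-1})=4n$, yielding $F_4\ge 4n-4\Phi$, where $\Phi:=\sum_{i=1}^4 Tr(A_iS^{-1}P_iS^{-1})$. It then remains to prove $\Phi\le \tfrac n2$.

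The main step, which I expect to be the real obstacle, is this bound $\Phi\le n/2$. Here I would normalize by setting $\widetilde A_i=S^{-1/2}A_iS^{-1/2}\succeq 0$, so that $\sum_i\widetilde A_i=I$ and $\Phi=\sum_i Tr\!\left[\widetilde A_i(\widetilde A_{i+1}+\widetilde A_{i+2})\right]$. Expanding and collecting, under the trace, the cross terms that occur twice rewrites this as $\Phi=2\,Tr(\widetilde A_1\widetilde A_3)+2\,Tr(\widetilde A_2\widetilde A_4)+Tr\!\left[(\widetilde A_1+\widetilde A_3)(\widetilde A_2+\widetilde A_4)\right]$. Writing $E=\widetilde A_1+\widetilde A_3$ and $G=\widetilde A_2+\widetilde A_4$, so that $E+G=I$, the estimates $Tr(\widetilde A_1\widetilde A_3)\le\tfrac14 Tr(E^2)$ and $Tr(\widetilde A_2\widetilde A_4)\le\tfrac14 Tr(G^2)$ (each from $Tr[(\widetilde A_1-\widetilde A_3)^2]\ge 0$, respectively $Tr[(\widetilde A_2-\widetilde A_4)^2]\ge 0$) give $\Phi\le\tfrac12 Tr(E^2)+\tfrac12 Tr(G^2)+Tr(EG)$. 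Because $Tr(EG)=Tr(GE)$, the right-hand side equals $\tfrac12 Tr\!\left[(E+G)^2\right]=\tfrac12 Tr(I)=\tfrac n2$, which is precisely the required bound.

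Combining the two parts gives $F_4\ge 4n-4\cdot\tfrac n2=2n$, with equality forced back to $\widetilde A_1=\widetilde A_3$ and $\widetilde A_2=\widetilde A_4$, that is $A=C,\ B=D$, matching the scalar equality case. The two delicate points to watch are the choice of anchor $P_0=\tfrac12 S$, which is exactly what makes the linear term telescope to $4n$ and keeps the quadratic remainder $\Phi$ controllable, and the completion of the square under the trace in the final line, where the non-commutativity of $E$ and $G$ is harmless solely because $Tr(EG)=Tr(GE)$.
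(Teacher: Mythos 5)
Your proof is correct, and it takes a genuinely different route from the paper's. The paper proceeds by symmetrization: alongside $M=\sum_i A_i(A_{i+1}+A_{i+2})^{-1}$ it introduces two companion cyclic sums $N$ and $P$ built from the same denominators, observes $N+P=4I$, bounds $Tr(M+P)\geq 4n$ using its operator AM--HM inequality (Theorem \ref{pd1}), and bounds $Tr(M+N)\geq 4n$ through a fairly heavy chain of estimates (AM--GM twice, the Cauchy--Schwarz trace inequality of Theorem \ref{thrm3}, and Remark \ref{sumformula}); the result then follows from $Tr(M)=\tfrac12\bigl(Tr(M+N)+Tr(M+P)-Tr(N+P)\bigr)$. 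You instead linearize $P\mapsto P^{-1}$ at the anchor $\tfrac12 S$ via the operator tangent-line inequality $P^{-1}\succeq 4S^{-1}-4S^{-1}PS^{-1}$ (a valid consequence of $X+X^{-1}\succeq 2I$ for Hermitian positive definite $X$, conjugated back by $S^{-1/2}$), pair it with the positivity of traces of products of positive semidefinite matrices, and reduce everything to the quadratic bound $\Phi\leq n/2$, which you settle by completing the square under the trace with $E+G=I$. Your argument is more elementary and self-contained — it needs only $X+X^{-1}\succeq 2I$, $Tr(AB)\geq 0$ for $A,B\succeq 0$, and $Tr(H^2)\geq 0$ for Hermitian $H$, avoiding Theorems \ref{pd1} and \ref{thrm3} entirely — and it transparently exhibits the equality case $A=C$, $B=D$, which the paper's proof does not isolate; it is also the faithful noncommutative analogue of the classical tangent-line/sum-of-squares proof of scalar Shapiro for $p=4$. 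What the paper's approach buys in exchange is the reuse of machinery it has already developed (and which it needs elsewhere, e.g., Theorem \ref{gnlp} uses the same pairing idea), at the cost of a longer and less illuminating chain of estimates.
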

\begin{proof}
	Since it is less cumbersome to work with matrices without subscripts, we denote $A, B, C, D$ to be positive definite matrices in 
	$\mathbb{C}^{n \times n}$ (instead of considering $A_1, A_2, A_3$ and $A_4$). Set 
	\begin{center}
		$M:=A(B+C)^{-1} +B(C+D)^{-1} +C(D+A)^{-1}+D(A+B)^{-1}.$
	\end{center}
	We must show that $Tr(M) \geq 2n$. Set
	\begin{center}
		$N:=B(B+C)^{-1} +C(C+D)^{-1} + D(D+A)^{-1}+A(A+B)^{-1}$
	\end{center}
	and
	\begin{center}
		$P:=C(B+C)^{-1} +D(C+D)^{-1} + A(D+A)^{-1}+B(A+B)^{-1}$.
	\end{center}
	Then $N+P=4I$ and so $Tr(N+P)=4n$. Further, by Theorem \ref{pd1}, one has $$(B+C)^{-1}+(D+A)^{-1} \succeq 2^2 (A+B+C+D)^{-1}$$ as well as $$(C+D)^{-1} +(A+B)^{-1} \succeq 2^2 (A+B+C+D)^{-1}.$$ By the monotonicity of the trace function, we then have 
	$$Tr\left[(A+C)((B+C)^{-1}+(D+A)^{-1})\right] \geq 4Tr\left[(A+C)((A+B+C+D)^{-1})\right]$$
	and $$Tr\left[(B+D)((C+D)^{-1} +(A+B)^{-1} )\right] \geq 4Tr\left[(B+D)((A+B+C+D)^{-1})\right].$$
	Note that $$M+P=(A+C)((B+C)^{-1}+(D+A)^{-1})+(B+D)((C+D)^{-1} +(A+B)^{-1}).$$
	Thus, $$Tr(M+P) \geq 4Tr\left[(A+B+C+D)((A+B+C+D)^{-1})\right] = 4n.$$
	
	\noindent Next, we estimate a lower bound for $Tr(M+N)$. For the sake of notational convenience, define $X_{U,V}:=U+V$ (so that $X_{U,V}=X_{V,U}$), given matrices $U$ and $V$. Then, 
	
	\begin{eqnarray}
		Tr (M+N) & = & Tr\left[X_{A,B}X_{B,C}^{-1}+X_{B,C}X_{C,D}^{-1}\right]\nonumber \\
		&& +Tr\left[X_{C,D}X_{A,D}^{-1} +X_{A,D}X_{A,B}^{-1} \right]\nonumber \\
		&\geq &2 \sqrt{Tr[X_{A,B}X_{B,C}^{-1}] Tr[X_{B,C}X_{C,D}^{-1}]}\nonumber \\
		&& +2 \sqrt{Tr[X_{C,D}X_{A,D}^{-1}] Tr[X_{A,D}X_{A,B}^{-1}]} \nonumber\\
		&=& 2 \sqrt{Tr[X_{A,B}^{1/2}X_{B,C}^{-1}X_{A,B}^{1/2}] Tr[X_{C,D}^{-1/2}X_{B,C}X_{C,D}^{-1/2}]}\nonumber \\
		&& +2 \sqrt{Tr[X_{C,D}^{1/2}X_{A,D}^{-1}X_{C,D}^{1/2}] Tr[X_{A,B}^{-1/2}X_{A,D}X_{A,B}^{-1/2}]}\nonumber \\
		& \geq & 2 \left[ Tr[X_{A,B}^{1/2}X_{C,D}^{-1/2}]+Tr[X_{C,D}^{1/2}X_{A,B}^{-1/2}]\right] \nonumber \\
		&\geq &4 \left[ Tr[X_{A,B}^{1/2}X_{C,D}^{-1/2}] Tr[X_{C,D}^{1/2}X_{A,B}^{-1/2}]\right]^{1/2} \nonumber \\
		&= &4 \left[Tr[X_{C,D}^{-1/4}X_{A,B}^{1/2}X_{C,D}^{-1/4}] Tr[X_{C,D}^{1/4}X_{A,B}^{-1/2}X_{C,D}^{1/4}] \right]^{1/2} \nonumber\\ 
		&\geq & 4 n.\nonumber 
	\end{eqnarray}
	The justifications in the above calculation are: the second and the fifth inequalities follow by the AM-GM inequality, (the third and the sixth equalities make use of the fact that $Tr(UV)=Tr(VU)$), the fourth inequality makes use of Theorem \ref{thrm3} (upon taking the square roots), and the last inequality follows from Remark \ref{sumformula}. Combining these, one obtains,
	\begin{eqnarray}
		Tr(M) & =& \frac{1}{2} (Tr(M+N)+Tr(M+P)-Tr(N+P))\geq 2n. \nonumber
	\end{eqnarray}
\end{proof}

The next result gives additional information on the validity of inequality \eqref{maineqn}.

\begin{prop}\label{shapirogen}
	If inequality \eqref{maineqn} does not hold for $p=k$, then it does not hold for $p=k+2$.
\end{prop}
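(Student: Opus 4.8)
The plan is to reproduce, at the level of matrices and the trace, the classical scalar reduction showing that a Shapiro counterexample of length $k$ breeds one of length $k+2$. Concretely, I would start from the hypothesis that \eqref{maineqn} fails for $p=k$, i.e.\ there exist positive definite $A_1,\dots,A_k\in\mathbb{C}^{n\times n}$ with $F(A_1,\dots,A_k)<\frac{k}{2}n$, and then manufacture a $(k+2)$-tuple violating the bound by simply appending two copies: set $A_{k+1}:=A_1$ and $A_{k+2}:=A_2$ and consider $F(A_1,\dots,A_k,A_{k+1},A_{k+2})$. Positive definiteness of the new tuple is automatic, since its entries are among the original matrices. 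This is exactly the situation anticipated when the domain of $F$ was deliberately left unspecified: the \emph{same} functional is evaluated on a $k$-tuple and on a $(k+2)$-tuple.

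The heart of the argument is a term-by-term comparison of the cyclic sum defining $F_{k+2}$ with that defining $F_k$, keeping the cyclic conventions straight. First I would record that for $i=1,\dots,k-2$ the summand $A_i(A_{i+1}+A_{i+2})^{-1}$ is literally unchanged. The two ``boundary'' summands, for $i=k-1$ and $i=k$, involve $A_{k+1}$ and $A_{k+2}$; but since in $F_k$ the cyclic convention already reads $A_{k+1}=A_1$ and $A_{k+2}=A_2$, our choice $A_{k+1}=A_1$, $A_{k+2}=A_2$ makes these two summands agree with the corresponding ones in $F_k$ as well. Hence every summand of $F_k$ reappears in $F_{k+2}$, and what remains are exactly the two genuinely new summands indexed by $i=k+1$ and $i=k+2$, namely $A_1(A_2+A_1)^{-1}$ and $A_2(A_1+A_2)^{-1}$, where one uses the length-$(k+2)$ cyclic wrap-around (under which the index $k+3$ returns to $A_1$ and $k+4$ to $A_2$).

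The decisive simplification is that these two leftover summands share the common denominator $(A_1+A_2)^{-1}$, so that their sum telescopes:
\[
A_1(A_1+A_2)^{-1}+A_2(A_1+A_2)^{-1}=(A_1+A_2)(A_1+A_2)^{-1}=I.
\]
Taking the trace of the whole comparison then yields the clean identity $F_{k+2}=F_k+Tr(I)=F_k+n$. Combining this with the hypothesis $F_k<\frac{k}{2}n$ gives $F_{k+2}<\frac{k}{2}n+n=\frac{k+2}{2}n$, so \eqref{maineqn} fails for $p=k+2$, as required.

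I expect no genuine analytic obstacle here; the only thing that needs care is the combinatorial bookkeeping of the cyclic indices, in particular verifying that the boundary terms $i=k-1,k$ are invariant under the extension and that the two new terms really do share a denominator. It is precisely the choice to append $A_1$ and $A_2$ (rather than arbitrary new positive definite matrices) that aligns the wrap-around of the length-$k$ and length-$(k+2)$ sums and produces the exact increment $n$; any other choice would leave residual, harder-to-control terms. Since only the trace and the identity matrix enter at the end, the noncommutativity of matrix multiplication never interferes.
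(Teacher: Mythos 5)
Your proposal is correct and follows exactly the paper's own argument: append $A_{k+1}=A_1$, $A_{k+2}=A_2$ to the length-$k$ counterexample and use the identity $F(A_1,\ldots,A_k,A_1,A_2)=F(A_1,\ldots,A_k)+n$, which the paper states without proof and you verify in detail via the telescoping sum $A_1(A_1+A_2)^{-1}+A_2(A_1+A_2)^{-1}=I$. No differences in substance; your index bookkeeping is accurate.
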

\begin{proof}
	Supoose that inequality \eqref{maineqn} is false for $p=k$. Then there exist positive definite matrices $A_1,\ldots,A_k \in \mathbb{C}^{n \times n}$ such that $F(A_1,\ldots,A_k)< \frac{kn}{2}$. It is easy to observe that 
	\begin{center}
		$F(A_1,\ldots,A_k, A_1,A_2)=F(A_1,\ldots,A_k)+n.$
	\end{center}
	Thus $F(A_1,\ldots,A_k, A_1,A_2)< \frac{(k+2)n}{2}$.
\end{proof}

As mentioned in the introduction, Shapiro's inequality holds for all $p\leq 13$ and, for odd $p$ in $13\leq p\leq 23$. By Proposition \ref{shapirogen}, if the inequality \eqref{maineqn} holds for $p=12$ and $p=23$, then it holds for $5\leq p \leq 12 $ (note that the case $p=3$ is Theorem \ref{nesbitt} and $p=4$ is Theorem \ref{S4}) and, for all odd $p$ in the interval $13\leq p\leq 23$. This leads us to the following conjecture.

\begin{conjecture}
	Let $p=12$ or $p=23$. Then $F_p\geq \frac{p}{2}n$.
\end{conjecture}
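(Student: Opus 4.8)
The plan is to reuse the symmetrization that settled the cases $p=3$ and $p=4$, and then to isolate precisely the part of the argument that is special to those small values. Write $M=\sum_{i=1}^p A_i(A_{i+1}+A_{i+2})^{-1}$, so that $F_p=Tr(M)$, and introduce the two ``shifted'' sums $N=\sum_{i=1}^p A_{i+1}(A_{i+1}+A_{i+2})^{-1}$ and $P=\sum_{i=1}^p A_{i+2}(A_{i+1}+A_{i+2})^{-1}$, all indices read cyclically. Since $N+P=pI$, one has the identity $F_p=\tfrac12\bigl(Tr(M+N)+Tr(M+P)\bigr)-\tfrac{p}{2}n$, so the conjecture is equivalent to the single estimate $Tr(M+N)+Tr(M+P)\ge 2pn$.

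First I would dispose of the term $Tr(M+N)$, which I expect to be bounded below by $pn$ for \emph{every} $p$, not just for $p=12,23$. Setting $X_i:=A_i+A_{i+1}$ (so that $A_{i+1}+A_{i+2}=X_{i+1}$) gives $M+N=\sum_{i=1}^p X_iX_{i+1}^{-1}$. Each factor $X_iX_{i+1}^{-1}$ is similar to the positive definite matrix $X_{i+1}^{-1/2}X_iX_{i+1}^{-1/2}$, hence has positive eigenvalues and determinant $\det X_i/\det X_{i+1}$; applying the arithmetic--geometric mean inequality to its $n$ eigenvalues, and then again to the $p$ positive numbers $(\det X_i/\det X_{i+1})^{1/n}$ whose cyclic product telescopes to $1$, yields $Tr(M+N)=\sum_i Tr(X_iX_{i+1}^{-1})\ge pn$. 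This half is therefore free and dimension-robust.

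All of the genuine difficulty is thus concentrated in the remaining term $Tr(M+P)=Tr\bigl[\sum_i (A_i+A_{i+2})(A_{i+1}+A_{i+2})^{-1}\bigr]$, and it is here that the value of $p$ must enter. For $p=4$ the proof of Theorem \ref{S4} bounded this term below by $pn$ using Theorem \ref{pd1}, but that step exploited the accident that the numerators $A_i+A_{i+2}$ repeat in pairs ($A+C$ and $B+D$), which fails as soon as $p\ge 5$; indeed the scalar companion $\sum_i (a_i+a_{i+2})/(a_{i+1}+a_{i+2})\ge p$ cannot hold for all $p$, for otherwise Shapiro's inequality would hold for every $p$. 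For $p=12$ and $p=23$ one must instead import the scalar arguments of Diananda and of Searcy--Troesch, recast at the level of traces: the strategy would be to group the cyclic terms and estimate each group by repeated use of the Cauchy--Schwarz trace inequality (Theorem \ref{thrm3}) and of Theorem \ref{pd1}, mimicking the telescoping already seen in the $Tr(M+N)$ bound and in Theorem \ref{S4}.

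The hard part will be exactly this last step, for two reasons. The combined estimate $Tr(M+N)+Tr(M+P)\ge 2pn$ may require $Tr(M+N)$ to compensate a deficit in $Tr(M+P)$, so the lossy reduction to ``$Tr(M+P)\ge pn$'' need not suffice and the two terms may have to be handled together. More seriously, the scalar proofs for $p=12,23$ proceed by ordering the variables and by boundary and mixing reductions, and these have no evident analogue for non-commuting matrices. It is worth noting that when $A_1,\dots,A_p$ commute they are simultaneously diagonalizable and $F_p$ splits as a sum over eigenvalues of $n$ scalar Shapiro sums, whence the inequality is immediate from the classical result; the entire content of the conjecture is therefore the non-commutative case, and a promising but speculative route is to try to deduce it from the commuting case by a convexity or continuity argument on the cone of positive definite matrices.
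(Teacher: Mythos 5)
The statement you are addressing is precisely the paper's Conjecture: the paper itself offers no proof (that is why it is stated as a conjecture), so the only question is whether your argument stands on its own. It does not. What you have is a correct reduction plus a research plan, not a proof. The correct parts are genuinely useful: the symmetrization $N+P=pI$ and the identity $F_p=\tfrac12\bigl(Tr(M+N)+Tr(M+P)\bigr)-\tfrac{p}{2}n$ are right, and your unconditional bound $Tr(M+N)=\sum_i Tr(X_iX_{i+1}^{-1})\ge pn$ is valid: $Tr(X_iX_{i+1}^{-1})=Tr(X_{i+1}^{-1/2}X_iX_{i+1}^{-1/2})\ge n\bigl(\det X_i/\det X_{i+1}\bigr)^{1/n}$ by AM--GM on eigenvalues, and a second AM--GM with the telescoping cyclic product gives the claim. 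This nicely parallels (and slightly generalizes) the mechanism used in Theorem \ref{S4} and Theorem \ref{gnlp}, and it is dimension- and $p$-independent, as you say.

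The gap is everything after that. The whole content of the conjecture now sits in the term $Tr(M+P)$, and for it you prove nothing: you only name the scalar arguments of Diananda, Searcy--Troesch and Troesch and propose to ``recast them at the level of traces,'' with no indication of how their ordering, boundary and mixing reductions — which permute and compare individual variables — survive noncommutativity. Worse, as you yourself observe, the split strategy is intrinsically lossy: $Tr(M+P)\ge pn$ cannot hold in general (it fails for scalars already at $p=14$, since together with $Tr(M+N)\ge pn$ it would prove Shapiro for all $p$), so for $p=12$ and $p=23$ one either needs to verify this delicate bound or to couple the two terms, and you do neither. The closing suggestion — deduce the noncommutative case from the commuting case ``by a convexity or continuity argument'' — is not a proof sketch but a hope: commuting tuples form a measure-zero subset of the cone of positive definite tuples, trace inequalities of this type do not extend from commuting to noncommuting arguments by any known convexity or density principle, and the paper's own $2\times 2$ example (where the eigenvalue version of the $p=4$ inequality fails, with nonreal eigenvalues appearing) shows concretely that noncommutativity changes the landscape. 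The conjecture therefore remains open after your argument; what you have established is the (worthwhile, but partial) fact that it is equivalent to the estimate $Tr(M+P)\ge pn$ being recoverable on average against the slack in $Tr(M+N)$.
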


Recall that Shapiro's inequality does not hold for even $p$ in the interval $14\leq p \leq 22$ and for any $p>23$. It is natural to ask if there exist positive definite matrices $A_1, \ldots, A_p$ such that inequality \eqref{maineqn} holds for some $p>23$ or, for some even $p$ in the interval $14\leq p \leq 22$? The following result provides an affirmative answer, and says something more.

\begin{thm}\label{gnlp}
	Let $p\geq 3$ and let $A_1,\ldots,A_p \in \mathbb{C}^{n \times n}$ be positive definite matrices. Then
	\begin{center}
		$F(A_1,\ldots,A_p)+F(A_p,A_{p-1},\ldots,A_1)\geq pn$.
	\end{center}
\end{thm}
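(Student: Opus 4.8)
The plan is to reduce the combined sum to a cyclic expression in the consecutive pair-sums $S_k := A_k + A_{k+1}$, and then to invoke Remark \ref{sumformula}. First I would rewrite the reversed sum in a convenient form. Starting from $F(A_1,\ldots,A_p) = \sum_{i=1}^p Tr[A_i(A_{i+1}+A_{i+2})^{-1}]$ and substituting the reversed list into the definition of $F$, a reindexing shows that $F(A_p,A_{p-1},\ldots,A_1) = \sum_{i=1}^p Tr[A_i(A_{i-1}+A_{i-2})^{-1}]$, where all indices are read cyclically modulo $p$. The purpose of this first step is that the two sums then share a common set of denominators.

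Next I would regroup the $2p$ summands by their denominators. For a fixed $k$, the inverse $(A_k+A_{k+1})^{-1}$ occurs in the forward sum paired with the numerator $A_{k-1}$ (taking $i=k-1$) and in the reversed sum paired with the numerator $A_{k+2}$ (taking $i=k+2$). Collecting these, one obtains
\[
F(A_1,\ldots,A_p)+F(A_p,\ldots,A_1) = \sum_{k=1}^p Tr\bigl[(A_{k-1}+A_{k+2})(A_k+A_{k+1})^{-1}\bigr].
\]
Setting $S_k := A_k+A_{k+1}$, which is positive definite, the elementary identity $A_{k-1}+A_{k+2} = S_{k-1}+S_{k+1}-S_k$ lets me split the right-hand side as
\[
\sum_{k=1}^p Tr[S_{k-1}S_k^{-1}] + \sum_{k=1}^p Tr[S_{k+1}S_k^{-1}] - pn,
\]
since $Tr[S_k S_k^{-1}] = Tr(I) = n$ for each $k$.

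The final step is to bound the two remaining sums together. Reindexing the second sum by $k \mapsto k-1$ turns $\sum_k Tr[S_{k+1}S_k^{-1}]$ into $\sum_k Tr[S_k S_{k-1}^{-1}]$, so the two sums combine into $\sum_{k=1}^p \bigl(Tr[S_{k-1}S_k^{-1}] + Tr[S_k S_{k-1}^{-1}]\bigr)$. For each $k$ this is exactly $Tr[X_k + X_k^{-1}]$ with $X_k := S_{k-1}S_k^{-1}$, a product of positive definite matrices; by Remark \ref{sumformula} every eigenvalue of $X_k + X_k^{-1}$ is at least $2$, so each summand is at least $2n$. Hence the two sums contribute at least $2pn$, yielding $F(A_1,\ldots,A_p)+F(A_p,\ldots,A_1) \geq 2pn - pn = pn$.

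I expect the main obstacle to be purely the bookkeeping of the first two steps: correctly identifying the reversed sum and verifying that each of the $2p$ terms is counted exactly once after regrouping by denominator. An off-by-one in the cyclic indices would break the identity $A_{k-1}+A_{k+2}=S_{k-1}+S_{k+1}-S_k$ on which everything rests. After that the argument is routine, relying only on the linearity of the trace and the eigenvalue bound $\lambda(X+X^{-1})\geq 2$ already recorded in Remark \ref{sumformula}.
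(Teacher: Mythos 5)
Your proof is correct and is essentially the paper's own argument: both reduce the combined sum to the cyclic expression $\sum_k Tr[(S_{k-1}+S_{k+1}-S_k)S_k^{-1}]$ in the pair-sums $S_k = A_k + A_{k+1}$ (the paper's $X_i$), and both conclude via the bound $\lambda(X+X^{-1})\geq 2$ from Remark \ref{sumformula}. Your write-up simply makes explicit the reindexing and denominator-grouping steps that the paper leaves implicit.
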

\begin{proof} Define
	\begin{center}
		$A_{p+1}:=A_1,~X_i:=A_i+A_{i+1}, ~X_{p+1}:=X_1$ and $X_{p+2}=X_2.$
	\end{center}
	Then, 
	\begin{eqnarray}F(A_1,\ldots,A_p)+F(A_p,\ldots,A_1)	&=&Tr\left[\sum\limits_{i=1}^p(X_i-X_{i+1}+X_{i+2})X^{-1}_{i+1}\right], \nonumber\\
		&=&Tr \left[\sum\limits_{i=1}^p(X_i+X_{i+2})X^{-1}_{i+1}-pI\right] \nonumber \\
		&=&\sum\limits_{i=1}^pTr\left(X_iX^{-1}_{i+1}+X_{i+1}X^{-1}_{i}
		\right)-pn\nonumber \\
		&\geq & 2pn -pn \nonumber\\
		&=&pn,\nonumber
	\end{eqnarray}
	where the fourth inequality follows from Remark \ref{sumformula}.
\end{proof}

For $p=4$, we can strengthen the conclusion of Theorem \ref{gnlp}. We state this next.  The proof is similar to Theorem \ref{nesbitt} and we leave the details to the interested reader.

\begin{cor}
	Let $A_1,A_2,A_3$ and $A_4$ be positive definite matrices. Then \begin{center} $\lambda \left(\sum\limits_{i=1}^4 A_i(A_{i+1}+A_{i+2})^{-1} + \sum\limits_{i=1}^4 A_{4-(i-1)}(A_{4-i}+A_{4-(i+1)})^{-1}\right) \geq 4,$\end{center}
	where $A_5=A_1, A_6=A_2$ and $A_0=A_4, A_{-1}=A_3$.
\end{cor}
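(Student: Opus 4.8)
The plan is to mimic the proof of Theorem~\ref{nesbitt}: rewrite the symmetrized cyclic sum as a single product of the form $(\text{sum})\cdot(\text{sum of inverses})$ minus a scalar multiple of the identity, and then invoke Corollary~\ref{pd}. The payoff of symmetrizing (adding the forward and reversed sums) is exactly that the non-Hermitian cross terms reassemble into such a product, which is why the eigenvalue statement survives here even though the verbatim analogue fails for the single cyclic sum.

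First I would expand the two cyclic sums and collect the eight terms by their denominators. With the conventions $A_5=A_1,A_6=A_2$ and $A_0=A_4,A_{-1}=A_3$, the forward sum contributes
$$A_1(A_2+A_3)^{-1}+A_2(A_3+A_4)^{-1}+A_3(A_4+A_1)^{-1}+A_4(A_1+A_2)^{-1},$$
while the reversed sum contributes
$$A_4(A_2+A_3)^{-1}+A_1(A_3+A_4)^{-1}+A_2(A_4+A_1)^{-1}+A_3(A_1+A_2)^{-1}.$$
The key observation is that each of the four distinct denominators $A_2+A_3$, $A_3+A_4$, $A_4+A_1$, $A_1+A_2$ now occurs exactly twice, and the two numerators over a common denominator add up to the complementary pair. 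Introducing $P=A_1+A_2$, $Q=A_2+A_3$, $R=A_3+A_4$, $S=A_1+A_4$, the matrix in the statement collapses to
$$M=SQ^{-1}+QS^{-1}+PR^{-1}+RP^{-1}.$$

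Next I would use the elementary identity $(U+V)(U^{-1}+V^{-1})=2I+UV^{-1}+VU^{-1}$ for positive definite $U,V$. Since $Q+S=P+R=A_1+A_2+A_3+A_4=:T$, applying this to the pairs $\{Q,S\}$ and $\{P,R\}$ gives $QS^{-1}+SQ^{-1}=T(Q^{-1}+S^{-1})-2I$ and $PR^{-1}+RP^{-1}=T(P^{-1}+R^{-1})-2I$, whence $M=T(P^{-1}+Q^{-1}+R^{-1}+S^{-1})-4I$. Finally I would apply Corollary~\ref{pd} to $P,Q,R,S$, which yields $\lambda\{(P+Q+R+S)(P^{-1}+Q^{-1}+R^{-1}+S^{-1})\}\geq 16$; because $P+Q+R+S=2T$, this reads $\lambda\{T(P^{-1}+Q^{-1}+R^{-1}+S^{-1})\}\geq 8$, and therefore $\lambda(M)\geq 8-4=4$.

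The genuinely delicate point is the last eigenvalue shift: $T(P^{-1}+Q^{-1}+R^{-1}+S^{-1})$ is not Hermitian, but it is a product of two positive definite matrices and hence is similar to the positive definite matrix $T^{1/2}(P^{-1}+Q^{-1}+R^{-1}+S^{-1})T^{1/2}$, so its eigenvalues are real and positive and the subtraction of $4I$ translates each of them down by exactly $4$. This is what lets the bound $\geq 8$ pass cleanly to $\lambda(M)\geq 4$. The only real obstacle is the bookkeeping in the first step—verifying the denominator matching and the complementary pairing of numerators; once the $P,Q,R,S$ substitution is in place, the remainder is a direct transcription of the Nesbitt argument.
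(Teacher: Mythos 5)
Your proposal is correct and is exactly the argument the paper has in mind: the paper omits the proof, saying only that it is ``similar to Theorem \ref{nesbitt}'', and your write-up is precisely that Nesbitt-style reduction — group the eight terms by common denominators so that $M=\frac{1}{2}(P+Q+R+S)(P^{-1}+Q^{-1}+R^{-1}+S^{-1})-4I$, then apply Corollary \ref{pd} to get $\lambda(M)\geq \frac{1}{2}\cdot 16-4=4$. The bookkeeping (denominator matching, complementary numerators, $P+R=Q+S=T$) and the eigenvalue shift under subtraction of $4I$ are all handled correctly.
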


In the last part of this article, we consider two consequences of the Cauchy-Schwarz inequality for matrices. We provide a proof for the first result, while we simply state the second result, as it follows along similar lines. Let us first recall the matrix version of Cauchy-Schwarz inequality

\begin{thm}\label{sch}
	Let $A,B \in \mathbb{C}^{m\times n}$. Then
	\begin{center}
		$\vert Tr(AB^*)\vert ^2 \leq Tr(AA^*) Tr(BB^*).$
	\end{center}
\end{thm}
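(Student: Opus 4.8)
The plan is to obtain Theorem~\ref{sch} as the special case of the weighted Cauchy--Schwarz inequality recorded in Theorem~\ref{thrm3} in which the weight is the identity. Since $I_n$ is positive definite, it is an admissible choice of weight. In the notation of Theorem~\ref{thrm3}, I would substitute $X := A^*$ and $Y := B^*$ (both of size $n \times m$) and take the positive definite matrix appearing there as a weight to be $I_n$. Then $X^*Y = AB^*$, while $X^* I_n X = X^*X = AA^*$ and $Y^* I_n^{-1} Y = Y^*Y = BB^*$, so that the conclusion of Theorem~\ref{thrm3} reads
\[
|Tr(AB^*)|^2 \leq Tr(AA^*)\,Tr(BB^*),
\]
which is exactly the assertion.

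The only point requiring care is the bookkeeping with adjoints and block dimensions: one must notice that the roles of $m$ and $n$ are interchanged in passing from $A, B \in \mathbb{C}^{m \times n}$ to $X = A^*,\, Y = B^* \in \mathbb{C}^{n \times m}$, so that the weight is a square matrix of the correct size $n \times n$ and $I_n$ is its own inverse. There is no genuine obstacle here; the argument reduces to verifying these identifications.

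Should a self-contained proof be preferred, I would instead observe that $\langle A, B \rangle := Tr(AB^*)$ is the Frobenius inner product on $\mathbb{C}^{m \times n}$: it is linear in its first slot, satisfies $\langle B, A \rangle = Tr(BA^*) = \overline{Tr(AB^*)} = \overline{\langle A, B\rangle}$, and is positive definite because $Tr(AA^*) = \sum_{i,j} |a_{ij}|^2 \geq 0$, with equality only when $A = 0$. The inequality is then the classical Cauchy--Schwarz inequality for this inner product. Concretely, this can be proved by expanding the nonnegative quantity $0 \leq Tr\big((A - tB)(A - tB)^*\big)$ for a complex scalar $t$ and, in the nontrivial case $Tr(BB^*) > 0$, choosing $t = Tr(AB^*)/Tr(BB^*)$ to collapse the resulting expression to $Tr(AA^*) - |Tr(AB^*)|^2/Tr(BB^*) \geq 0$, with the case $B = 0$ being immediate. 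Both routes are short, and I expect the result to follow without difficulty.
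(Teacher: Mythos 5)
Your proposal is correct, but note that the paper contains no proof to compare it with: Theorem~\ref{sch} is simply \emph{recalled} there as the matrix (Frobenius) form of the Cauchy--Schwarz inequality, in the same way that Theorems~\ref{herm} and~\ref{thrm3} are stated without proof. Both of your routes are valid. In the first, your bookkeeping is indeed right: with $X=A^*$, $Y=B^*\in\mathbb{C}^{n\times m}$, the weight in Theorem~\ref{thrm3} must be of size $n\times n$, and taking it to be $I_n$ gives $X^*Y=AB^*$, $X^*I_nX=AA^*$, $Y^*I_n^{-1}Y=BB^*$, hence the claim. This is legitimate within the paper's logical structure, since Theorem~\ref{thrm3} is imported from Zhang's book as a black box; but be aware that it inverts the usual order of deduction, because the weighted inequality is ordinarily \emph{derived from} the unweighted one via the substitution $X\mapsto A^{1/2}X$, $Y\mapsto A^{-1/2}Y$, so as a free-standing argument your first route borders on circularity. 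Your second route --- Cauchy--Schwarz for the inner product $\langle A,B\rangle=Tr(AB^*)$, obtained by expanding $0\le Tr\left((A-tB)(A-tB)^*\right)$ and choosing $t=Tr(AB^*)/Tr(BB^*)$ when $Tr(BB^*)>0$ --- is the standard self-contained proof; the computation does collapse correctly to $Tr(AA^*)-\vert Tr(AB^*)\vert^2/Tr(BB^*)\ge 0$, using $Tr(BA^*)=\overline{Tr(AB^*)}$, with $B=0$ trivial. That is the version I would record, since it is precisely what the paper implicitly appeals to when it ``recalls'' the result.
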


\begin{thm}
	Define $M$ by 
	\begin{center}
		$M=A(2A+B)^{-1} +B(2B+C)^{-1} + C(2C+A)^{-1}$,
	\end{center}
	where $A, B, C\in \mathbb{C}^{n\times n}$ are positive definite matrices. Then $Tr(M)\leq (3n-1)/2.$
\end{thm}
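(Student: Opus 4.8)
The plan is to rewrite each summand so that the desired upper bound on $Tr(M)$ becomes a lower bound on a companion trace, and then to produce that lower bound using Theorem~\ref{sch}. First I would use the identity $A(2A+B)^{-1} = \frac{1}{2}I - \frac{1}{2}B(2A+B)^{-1}$, which holds because $A = \frac{1}{2}(2A+B) - \frac{1}{2}B$, together with its two cyclic analogues, to obtain
\begin{equation*}
Tr(M) = \frac{3n}{2} - \frac{1}{2}T, \qquad T := Tr(B(2A+B)^{-1}) + Tr(C(2B+C)^{-1}) + Tr(A(2C+A)^{-1}).
\end{equation*}
In this way the assertion $Tr(M) \leq (3n-1)/2$ is seen to be equivalent to the single scalar inequality $T \geq 1$.

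To bound a typical term of $T$ from below, I would apply Theorem~\ref{sch} with $X = B^{1/2}(2A+B)^{-1/2}$ and $Y = B^{1/2}(2A+B)^{1/2}$, both of which are well defined since $2A+B$ is positive definite. A direct computation gives $XY^* = B$, $XX^* = B^{1/2}(2A+B)^{-1}B^{1/2}$ and $YY^* = B^{1/2}(2A+B)B^{1/2}$, so Theorem~\ref{sch} yields $(Tr(B))^2 \leq Tr(B(2A+B)^{-1})\, Tr(B(2A+B))$, that is,
\begin{equation*}
Tr(B(2A+B)^{-1}) \geq \frac{(Tr(B))^2}{2\,Tr(AB) + Tr(B^2)},
\end{equation*}
where the denominator is positive by Theorem~\ref{herm}.

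Summing this estimate together with its two cyclic versions and then applying the scalar Cauchy--Schwarz (Engel) inequality $\sum_i p_i^2/q_i \geq (\sum_i p_i)^2/\sum_i q_i$ to the three resulting fractions, I would obtain, writing $S := A+B+C$,
\begin{equation*}
T \geq \frac{(Tr(A)+Tr(B)+Tr(C))^2}{(2Tr(AB)+Tr(B^2)) + (2Tr(BC)+Tr(C^2)) + (2Tr(CA)+Tr(A^2))} = \frac{(Tr(S))^2}{Tr(S^2)},
\end{equation*}
the denominator collapsing to $Tr(S^2)$ because $Tr(UV) = Tr(VU)$. Since $S$ is positive definite, its eigenvalues $\lambda_1, \ldots, \lambda_n$ are positive, whence $(Tr(S))^2 = (\sum_i \lambda_i)^2 \geq \sum_i \lambda_i^2 = Tr(S^2)$; therefore $T \geq 1$ and $Tr(M) = \frac{3n}{2} - \frac{1}{2}T \leq (3n-1)/2$.

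The step I expect to be the main obstacle is the second one: one must choose the factorization into $X$ and $Y$ via the symmetric square roots $(2A+B)^{\pm 1/2}$ and $B^{1/2}$ exactly so that Cauchy--Schwarz converts the noncommutative quantity $Tr(B(2A+B)^{-1})$ into an honest scalar fraction with the ``right'' numerator $(Tr(B))^2$ and denominator $Tr(B(2A+B))$. Once every term has been scalarized in this fashion, the remaining ingredients---the classical Engel form of Cauchy--Schwarz and the elementary bound $(\sum_i \lambda_i)^2 \geq \sum_i \lambda_i^2$ for positive reals---are routine and impose no further difficulty.
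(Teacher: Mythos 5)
Your proof is correct, and it shares the paper's two-step skeleton: both arguments first pass to the companion sum (your termwise identity $A(2A+B)^{-1}=\frac{1}{2}I-\frac{1}{2}B(2A+B)^{-1}$ is exactly the paper's observation that $2M+N=3I$, where $N=B(2A+B)^{-1}+C(2B+C)^{-1}+A(2C+A)^{-1}$, so your $T$ is the paper's $Tr(N)$), and both then establish $T\geq (Tr(S))^2/Tr(S^2)\geq 1$ with $S=A+B+C$, using the matrix Cauchy--Schwarz inequality of Theorem \ref{sch}. Where you genuinely differ is in how Cauchy--Schwarz is deployed. The paper applies it \emph{once}, to $1\times 3$ block matrices $W=(BW_1,\,CW_2,\,AW_3)$ and $Z=(W_1^{-1},\,W_2^{-1},\,W_3^{-1})$ with $W_1=(2B^{1/2}AB^{1/2}+B^2)^{-1/2}$ and its cyclic analogues, engineered so that $Tr(WW^*)=Tr(N)$, $Tr(WZ^*)=Tr(S)$ and $Tr(ZZ^*)=Tr(S^2)$; the block concatenation silently performs the summation over the three terms. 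You instead scalarize each term separately via $X=B^{1/2}(2A+B)^{-1/2}$, $Y=B^{1/2}(2A+B)^{1/2}$, getting $Tr(B(2A+B)^{-1})\geq (Tr(B))^2/Tr(B(2A+B))$, and then glue the three fractions with the scalar Engel form of Cauchy--Schwarz. The two routes are mathematically equivalent---applying Cauchy--Schwarz to concatenated blocks is precisely what makes the Engel step automatic---but yours is arguably more transparent: it avoids having to guess the auxiliary matrices $W_i$, each application of Theorem \ref{sch} involves only the natural symmetric square roots $B^{1/2}(2A+B)^{\pm 1/2}$, and the bookkeeping that collapses the denominator to $Tr(S^2)$ is elementary trace cyclicity. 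The small price is the extra (routine) Engel step. One cosmetic difference at the end: you prove $(Tr(S))^2\geq Tr(S^2)$ by expanding in the eigenvalues of $S$, whereas the paper invokes Theorem \ref{herm} with both factors equal to $S$; either is fine.
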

\begin{proof}
	Define the matrix
	\begin{center}
		$N=B(2A+B)^{-1} +C(2B+C)^{-1} + A(2C+A)^{-1}$.
	\end{center}
	First, observe that $2M+N=3I$ and so, $Tr(2M+N)=3n$. Next, we show that $Tr(N)\geq 1$. In order to make it easy for computations, we define the block matrices $W, Z \in \mathbb{C}^{n \times 3n}$, each having three column blocks as follows: 
	\begin{center}
		$W=(BW_1, CW_2, AW_3)$ and $Z=(Z_1, Z_2, Z_3),$
	\end{center}
	where $$W_1:=(2B^{1/2}AB^{1/2}+B^2)^{-1/2}=Z_1^{-1},$$ $$W_2:= (2C^{1/2}BC^{1/2}+C^2)^{-1/2}=Z_2^{-1}$$ and $$W_3:= (2A^{1/2}CA^{1/2}+A^2)^{-1/2}=Z_3^{-1}.$$
	Then,
	\[WZ^*=A+B+C, ZZ^*=A^2+B^2+C^2+2(A^{\frac{1}{2}}CA^{\frac{1}{2}}+B^{\frac{1}{2}}AB^{\frac{1}{2}}+C^{\frac{1}{2}}BC^{\frac{1}{2}})\] and \[WW^*=BW_1^2B+CW_2^2C+AW_3^2A.\] Thus, 
	$$Tr(WZ^*)=Tr(A+B+C),$$
	$$Tr(ZZ^*)=Tr(A^2+B^2+C^2+2(AB+BC+CA))$$ and $$Tr(WW^*)=Tr(B^2W_1^2+C^2W_2^2+A^2W_3^2).$$
	Note that, $Tr(N)=Tr(WW^*).$ By Theorem \ref{sch},
	\begin{center}
		$\vert Tr(WZ^*)\vert ^2 \leq Tr(WW^*) Tr(ZZ^*).$
	\end{center}
	Employing Theorem \ref{herm}, one observes that the quotient $$\frac{\vert Tr(WZ^*)\vert ^2}{Tr(ZZ^*)} \geq 1.$$
	Thus, $Tr(N)\geq 1$, so that $Tr(M) \leq (3n-1)/2$.
\end{proof} 

The corollary concluding this note, is an easy consequence.

\begin{cor}
	Let $A_1,\ldots, A_p\in \mathbb{C}^{n \times n}$ be positive definite matrices. Then
	\begin{center}
		$Tr(A_1^2A_2^{-1}+\cdots +A_p^2A_1^{-1})\geq Tr(A_1 + \cdots +A_p).$
	\end{center}
\end{cor}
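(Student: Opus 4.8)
The plan is to imitate the scalar proof of $\sum_{i=1}^p a_i^2/a_{i+1} \geq \sum_{i=1}^p a_i$ (indices read cyclically), which follows by applying the two-term AM--GM bound $a_i^2/a_{i+1} + a_{i+1} \geq 2a_i$ and summing, since $\sum_i a_{i+1} = \sum_i a_i$. The matrix analogue I aim for is the termwise trace inequality
\[Tr(A_i^2 A_{i+1}^{-1}) + Tr(A_{i+1}) \geq 2\,Tr(A_i), \qquad i = 1, \ldots, p,\]
with the convention $A_{p+1} = A_1$; summing this and cancelling $\sum_i Tr(A_{i+1}) = \sum_i Tr(A_i)$ immediately yields the claim.

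To establish the termwise inequality I would first produce a \emph{product} version from the Cauchy--Schwarz inequality (Theorem \ref{sch}) and then pass to the \emph{sum} version by AM--GM. Since each $A_i$ is positive definite, the Hermitian square roots $A_i^{1/2}$ and $A_i^{-1/2}$ exist. Set $X := A_i A_{i+1}^{-1/2}$ and $Y := A_{i+1}^{1/2}$, both in $\mathbb{C}^{n\times n}$. Using that $A_i$ and $A_{i+1}^{\pm 1/2}$ are Hermitian, together with the cyclicity of the trace, one computes $XY^* = A_i$, so $Tr(XY^*) = Tr(A_i)$; $YY^* = A_{i+1}$, so $Tr(YY^*) = Tr(A_{i+1})$; and $XX^* = A_i A_{i+1}^{-1} A_i$, so $Tr(XX^*) = Tr(A_i^2 A_{i+1}^{-1})$. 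Because $A_i$ is positive definite, $Tr(A_i)$ is a positive real, whence $\vert Tr(XY^*)\vert^2 = Tr(A_i)^2$, and Theorem \ref{sch} delivers
\[Tr(A_i)^2 \leq Tr(A_i^2 A_{i+1}^{-1})\, Tr(A_{i+1}).\]

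The final step is AM--GM applied to the two positive scalars $Tr(A_i^2 A_{i+1}^{-1})$ and $Tr(A_{i+1})$: from the displayed product bound,
\[Tr(A_i^2 A_{i+1}^{-1}) + Tr(A_{i+1}) \geq 2\sqrt{Tr(A_i^2 A_{i+1}^{-1})\, Tr(A_{i+1})} \geq 2\sqrt{Tr(A_i)^2} = 2\,Tr(A_i),\]
which is precisely the termwise inequality. Summing over $i = 1, \ldots, p$ and using the cyclic cancellation completes the proof. I do not expect a genuine obstacle here; the only point requiring care is the choice of the factorization $X = A_i A_{i+1}^{-1/2}$, $Y = A_{i+1}^{1/2}$, which is engineered so that $XY^*$ collapses to $A_i$ while $XX^*$ reproduces $A_i^2 A_{i+1}^{-1}$ after a cyclic permutation inside the trace.
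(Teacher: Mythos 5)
Your proof is correct, but it takes a structurally different route from the paper's. The paper applies the trace Cauchy--Schwarz inequality (Theorem \ref{sch}) exactly once, globally, to the block matrices $W=(A_1A_2^{-1/2}, A_2A_3^{-1/2}, \ldots, A_pA_1^{-1/2})$ and $Z=(A_2^{1/2}, A_3^{1/2}, \ldots, A_1^{1/2})$ in $\mathbb{C}^{n\times pn}$ --- whose individual blocks are precisely your $X$ and $Y$ --- so that $WZ^*=ZZ^*=A_1+\cdots+A_p$ while $Tr(WW^*)$ equals the left-hand side of the corollary; the inequality $\vert Tr(WZ^*)\vert^2\leq Tr(WW^*)\,Tr(ZZ^*)$ then yields the claim in one stroke after dividing by the positive scalar $Tr(A_1+\cdots+A_p)$, with no AM--GM step and no summation over terms. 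You instead apply Cauchy--Schwarz $p$ times, once per cyclic pair $(A_i, A_{i+1})$, and then assemble the result via two-term AM--GM and cyclic cancellation, exactly mirroring the scalar proof of $\sum a_i^2/a_{i+1}\geq\sum a_i$. The trade-off: the paper's block-matrix trick is shorter and needs no auxiliary inequality beyond Cauchy--Schwarz itself, whereas your termwise argument avoids block matrices entirely and delivers strictly more local information, namely the per-term bound $Tr(A_i^2A_{i+1}^{-1})\geq 2\,Tr(A_i)-Tr(A_{i+1})$ for every $i$, of which the stated corollary is just the cyclic sum.
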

\begin{proof}
	Since $A_1, \ldots, A_p$ are positive definite, their square roots and inverses exist. Define block matrices $W, Z \in \mathbb{C}^{n \times pn}$ by
	\[ W=(A_1A_2^{-1/2}, A_2A_3^{-1/2}, \ldots, A_pA_1 ^{-1/2})\] and 
	\[Z=(A_2^{1/2}, A_3^{1/2} \ldots, A_p^{1/2}, A_1^{1/2}).\]
	Then, $WZ^*=A_1+\ldots +A_p$, $WW^*=A_1A_2^{-1}A_1+\ldots A_pA_1^{-1}A_p$ and $ZZ^*=A_1+\ldots +A_p$.
	By Theorem \ref{sch},
	\[Tr(A_1^2A_2^{-1}+\cdots +A_p^2A_1^{-1})\geq Tr(A_1 + \cdots +A_p).\]
\end{proof}


\subsection*{Acknowledgment}
We thank Apoorva Khare for a detailed reading of an earlier draft and for providing valuable comments and feedback. We also thank anonymous referees for their helpful comments.

\end{document}